\newtheorem{theorem}{Theorem}[section]
\newtheorem{corollary}[theorem]{Corollary}
\newtheorem{proposition}[theorem]{Proposition}
\newtheorem{example}[theorem]{Example}
\begin{document}

\title{Linearly ordered compacts and co-Namioka spaces}

\author{V.V.Mykhaylyuk}
\address{Department of Mathematics\\
Chernivtsi National University\\ str. Kotsjubyn'skogo 2,
Chernivtsi, 58012 Ukraine}
\email{vmykhaylyuk@ukr.net}

\subjclass[2000]{Primary 54F05; Secondary 54D30, 54C08, 54C30, 54C05}


\commby{Ronald A. Fintushel}


\keywords{separately continuous functions, Namioka property, linearly ordered compact}

\begin{abstract}
It is shown that for any Baire space $X$, linearly ordered compact $Y$ and separately continuous mapping $f:X\times Y\to\mathbb R$ there exists a dense in $X$ $G_\delta$-set $A\subseteq X$ such that $f$ is jointly  continuous at every point of $A\times Y$, i.e. any linearly ordered compact is a co-Namioka space.
\end{abstract}

\maketitle
\section{Introduction}

Investigation of the joint continuity points set of separately continuous functions defined on the product of a Baire space and a compact space take an important place in the separately continuous mappings theory. The classical Namioka's result [1] become the impulse to the intensification of these investigations. In particular, the following notions were introduced in [2].

Let $X$, $Y$ be topological spaces. We say that a separately continuous function $f:X\times Y\to \mathbb R$ {\it has the Namioka property} if there exists a dense in $X$ $G_\delta$-set $A\subseteq X$ such that $f$ is jointly continuous at every point of set $A\times Y$.

A compact space $Y$ is called {\it a co-Namioka space} if for every Baire space $X$ each separately continuous napping $f:X\times Y\to \mathbb R$ has the Namioka property.

Most general results in the direction of study of co-Namioka space properties were obtained in [3,4]. It was obtained in [3.4] that the class of compact co-Namioka spaces is closed over products and contains all Valdivia compacts. Moreover, it was shown in [3] that the linearly ordered compact $[0,1]\times \{0,1\}$ with the lexicographical order is co-Namioka and it was reproved in [3]  (result from [5]) that every completely ordered compact is co-Namioka. Thus, the following question naturally arises: is every linearly ordered compact co-Namioka?

In this paper we give the positive answer to this question.

\section{Definitions and auxiliary statements}

Let $X$ be a topological space and $f:X\to \mathbb R$. For every nonempty set $A\subseteq X$ by $\omega_f(A)$ we denote the oscillation
$${\rm sup}\{|f(x')-f(x'')|:x',x''\in A\}$$
of the function $f$ on the set $A$. Moreover for every point $x_o\in X$ by $\omega_f(x_o)$ we denote the oscillation ${\rm inf}\{\omega_f(U):U\in {\mathcal U}\}$ of the function $f$ at $x_o$, where $\mathcal U$ is the system of all neighborhoods of $x_o$ in $X$.

Let $X$, $Y$ be topological spaces, $f:X\times Y\to \mathbb R$, $x_o\in X$ and $y_o\in Y$. The mappings $f^{x_o}$ and $f_{y_o}$ are defined by: $f^{x_o}(y)=f_{y_o}(x)=f(x,y)$ for every $x\in X$ ³ $y\in Y$.

For a linearly ordered set $(X,<)$ and a point $x', x'' \in X$, $x'<x''$, we put 
$$[x',x'']=\{x\in X: x'\leq x\leq x''\},\,\,\,[x',x'')=\{x\in X: x'\leq x< x''\},$$
$$(x',x'']=\{x\in X: x'< x\leq x''\}\,\,\,{\rm and}\,\,\,(x',x'')=\{x\in X: x'< x < x''\}.$$ Points $x', x''\in X$, $x'<x''$ is called {\it neighbor}, if $(x',x'')=\O$. Recall that all nonempty open intervals $(x',x'')$ and intervals $[a,x)$ and $(x,b]$, where $a$ and $b$ are the minimal and maximal elements in $X$ respectively (if they exist) form a base of the topology on $X$. It easy to see that a linearly ordered space $X$ is a compact space with respect to the topology generated by the linear order, if and only if every nonempty closed set $A\subseteq X$ has in $X$ minimal and maximal elements.

A topological space $X$ is called {\it connected}, if $A\cup B\ne X$ for every disjoint nonempty open in $X$ sets $A$ and $B$. Note that a linearly ordered compact $X$ is connected if and only if $X$ has not neighbor elements.

\begin{proposition} \label{p:2.1} Let $(X,<)$ be a linearly ordered compact, $f:X\to\mathbb R$ be a continuous function and $\varepsilon>0$. Then there exists
$n\in\mathbb N$ such that for every elements $a_1, a_2, \dots , a_n, b_1, b_2, \dots , b_n \in X$ such that $a_1< b_1\leq a_2 < b_2 \leq \dots \leq a_n<b_n$, there exists $k\leq n$ such that $|f(a_k)-f(b_k)|<\varepsilon$.
\end{proposition}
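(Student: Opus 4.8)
The plan is to combine the constraint that continuity imposes on a compact space with the order (convexity) structure, through a finite-subcover argument. First I would exploit continuity of $f$: for every $x\in X$ choose a basic order-interval neighborhood $U_x$ of $x$ with $|f(y)-f(x)|<\varepsilon/2$ for all $y\in U_x$; then any two points of $U_x$ have $f$-values differing by strictly less than $\varepsilon$, so $\omega_f(U_x)\le\varepsilon$ and, more to the point, no pair inside a single $U_x$ can realize a jump of size $\ge\varepsilon$. The family $\{U_x:x\in X\}$ is an open cover of the compact space $X$, so it admits a finite subcover $U_{x_1},\dots,U_{x_m}$. I then claim that $n=m+1$ works.

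To establish this I would argue by contradiction: suppose $a_1<b_1\le a_2<b_2\le\dots\le a_n<b_n$ satisfy $|f(a_k)-f(b_k)|\ge\varepsilon$ for every $k\le n$. For each $k$ pick $i(k)$ with $a_k\in U_{x_{i(k)}}$, and let $d_{i(k)}$ denote the right endpoint of the order interval $U_{x_{i(k)}}$. Since $b_k$ cannot lie in $U_{x_{i(k)}}$ — otherwise $|f(a_k)-f(b_k)|<\varepsilon$ — and since $b_k>a_k$ while $U_{x_{i(k)}}$ is convex, the point $b_k$ must lie at or beyond the right end of $U_{x_{i(k)}}$, which gives $a_k<d_{i(k)}\le b_k$. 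The boundary case in which $U_{x_{i(k)}}$ reaches the maximum of $X$ (a neighborhood of type $(x',b]$) is impossible here, because then every point above $a_k$, and in particular $b_k$, would already belong to $U_{x_{i(k)}}$; in the remaining cases $d_{i(k)}$ is a genuine element of $X$.

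Finally, the chain condition propagates these endpoints: for $k<n$ we have $d_{i(k)}\le b_k\le a_{k+1}<d_{i(k+1)}$, so $d_{i(1)}<d_{i(2)}<\dots<d_{i(n)}$ are pairwise distinct. But all of them are right endpoints drawn from the $m$ intervals of the subcover, hence $n\le m$, contradicting $n=m+1$. This contradiction shows that in any configuration of length $m+1$ at least one pair must have $|f(a_k)-f(b_k)|<\varepsilon$, which is the assertion. I expect the only delicate point to be the bookkeeping around the order-topology base at the extreme points of $X$ — the half-open intervals $[a,x)$ and $(x,b]$ at the minimum and maximum — which is precisely where one must verify that the right endpoint $d_{i(k)}$ is an honest element of $X$ satisfying $b_k\ge d_{i(k)}$; everything else follows directly from compactness and the convexity of order intervals.
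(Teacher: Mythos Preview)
Your argument is correct and follows the same overall strategy as the paper: cover $X$ by finitely many order intervals of oscillation $<\varepsilon$, set $n=m+1$, and derive a contradiction by a counting argument. The only difference is in the final combinatorics. The paper argues directly by pigeonhole: the $2n=2m+2$ points $a_1,b_1,\dots,a_n,b_n$ cannot be distributed among $m$ intervals with at most two in each, so some interval $U_{i_0}$ contains three of them; convexity then forces $U_{i_0}$ to contain a full pair $a_k,b_k$, giving $|f(a_k)-f(b_k)|<\varepsilon$ immediately. Your version instead manufactures an injection $k\mapsto d_{i(k)}$ into the set of right endpoints, which requires the extra bookkeeping you flag at the maximum of $X$. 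Both work; the paper's pigeonhole step is slightly shorter and sidesteps the endpoint case analysis entirely.
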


\begin{proof} Consider a finite cover $(U_i:i\in I)$ of compact space $X$ by intervals $U_i$ such that $\omega_f(U_i)<\varepsilon$ for every $i\in I$. Put $n=|I|+1$. Then for every points $a_1< b_1\leq a_2 < b_2 \leq \dots \leq a_n<b_n$ from the space $X$ there exists $i_o\in I$ such that the interval $U_{i_o}$ coincides at least three of these points. Taking into account that $U_{i_o}$ is an interval we obtain that there exists $k\leq n$ such that $a_k, b_k \in U_{i_o}$. Then $$|f(a_k)-f(b_k)|\leq \omega_f(U_{i_o})<\varepsilon.$$
\end{proof}

\begin{proposition}\label{p:2.2} Let $(X,<)$ be a linearly ordered connected compact, $a=\min X$, $b=\max X$, $f:X\to {\mathbb R}$ be a continuous mappings and  $f(a)\ne f(b)$. Then there exists a point $c\in (a,b)$ such that $f(c)=\frac{1}{2}(f(a)+f(b))$.\end{proposition}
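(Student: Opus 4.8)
The plan is to recognize this statement as the Intermediate Value Theorem for a continuous real-valued function on a linearly ordered connected compact, and to prove it by exploiting the connectedness criterion recorded just before the proposition, namely that $X$ cannot be split into two disjoint nonempty open sets. First I would set $m=\frac{1}{2}(f(a)+f(b))$ and, without loss of generality, assume $f(a)<f(b)$ (otherwise replace $f$ by $-f$, which does not change the problem); then $f(a)<m<f(b)$.

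Next I would argue by contradiction. Suppose no point $c\in X$ satisfies $f(c)=m$. Consider the two sets
$$A=\{x\in X: f(x)<m\}\quad\text{and}\quad B=\{x\in X: f(x)>m\}.$$
Since $f$ is continuous, $A=f^{-1}((-\infty,m))$ and $B=f^{-1}((m,+\infty))$ are open in $X$. They are plainly disjoint, and the standing assumption that $m$ is never attained gives $A\cup B=X$. Finally they are both nonempty, because $a\in A$ (as $f(a)<m$) and $b\in B$ (as $f(b)>m$). This exhibits $X$ as a union of two disjoint nonempty open sets, contradicting the connectedness of $X$. Hence there must exist a point $c\in X$ with $f(c)=m$.

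It remains only to check that such a $c$ lies strictly inside $(a,b)$, and here I would observe that $f(c)=m=\frac{1}{2}(f(a)+f(b))$ forces $c\neq a$ and $c\neq b$: an equality $f(a)=m$ or $f(b)=m$ would yield $f(a)=f(b)$, contradicting the hypothesis. Therefore $c\in(a,b)$, as required. I do not expect a genuine obstacle in this argument; the only point needing care is the verification that the two pieces $A$ and $B$ actually cover $X$, which is exactly where the indirect assumption $f(c)\neq m$ for all $c$ is used, and the automatic exclusion of the endpoints, which relies on $f(a)\neq f(b)$.
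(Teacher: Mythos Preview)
Your argument is correct and is essentially the same as the paper's: both split $X$ by the open preimages $A=f^{-1}((-\infty,m))$ and $B=f^{-1}((m,+\infty))$ and use connectedness to see that $X\setminus(A\cup B)$ is nonempty. You are in fact slightly more careful than the paper, since you explicitly justify that $A$ and $B$ are nonempty and that the resulting point $c$ must lie in $(a,b)$.
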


\begin{proof} We put $y_o=\frac{1}{2}(f(a)+f(b))$, $A=f^{-1}((-\infty,y_o)$ and $B=f^{-1}((y_o,+\infty))$. Since $f$ is continuous, the sets $A$ and $B$ are open in $X$. It follows from the connectivity of $X$ that the set $C=X\setminus (A\cup B)$ is nonempty. It remains to chose a point $c\in C$.
 \end{proof}

\section{Main results}

\begin{theorem}\label{th:3.1} Every linearly connected ordered compact is co-Namioka space.
\end{theorem}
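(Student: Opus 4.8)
The plan is to reduce the assertion to a single-$\varepsilon$ oscillation estimate, then extract fiber by fiber a \emph{uniform} bound on the number of large oscillations of $f$ by means of Proposition~\ref{p:2.1}, and finally run an induction on that bound. Write $\omega_f(x_0,y_0)$ for the oscillation of $f$ at $(x_0,y_0)\in X\times Y$. Since $f$ is jointly continuous at $(x_0,y_0)$ exactly when $\omega_f(x_0,y_0)=0$, it suffices to produce, for each $\varepsilon>0$, a dense open set $O_\varepsilon\subseteq X$ with $\omega_f(x_0,y_0)\le\varepsilon$ for all $x_0\in O_\varepsilon$ and all $y_0\in Y$; then $A=\bigcap_{m}O_{1/m}$ is the required dense $G_\delta$ set. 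As $X$ is Baire and every open subspace of a Baire space is Baire, density of $O_\varepsilon$ amounts to the local statement: \emph{for every nonempty open $W\subseteq X$ there is a nonempty open $U\subseteq W$ with $\omega_f(x_0,y_0)\le C\varepsilon$ for all $x_0\in U$, $y_0\in Y$}, where $C$ is an absolute constant (harmless, since one may start from $\varepsilon/C$).

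First I would fix such a $W$ and $\varepsilon$ and exploit separate continuity in the $Y$-variable. For each $x\in W$ the section $f^{x}\colon Y\to\mathbb R$ is continuous, so Proposition~\ref{p:2.1} supplies an $n\in\mathbb N$ governing the number of disjoint $\varepsilon$-increments of $f^{x}$. Accordingly I set
$$F_n=\Bigl\{x\in W:\ \text{for every chain } a_1<b_1\le\dots\le a_n<b_n \text{ in } Y \text{ there is } k\le n \text{ with } |f(x,a_k)-f(x,b_k)|\le\varepsilon\Bigr\}.$$
For a fixed chain the map $x\mapsto|f(x,a_k)-f(x,b_k)|$ is continuous (separate continuity in $x$), so each condition $|f(x,a_k)-f(x,b_k)|\le\varepsilon$ is closed; a finite union over $k$ and an intersection over all chains keep the set closed, whence every $F_n$ is closed in $W$. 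The $F_n$ increase with $n$, and Proposition~\ref{p:2.1} gives $W=\bigcup_n F_n$. Since $W$ is Baire, some $F_{n_0}$ has nonempty interior: there are a nonempty open $U_0\subseteq W$ and an integer $n_0$ such that \emph{uniformly} over $x\in U_0$ the section $f^{x}$ admits no $n_0$ disjoint increments all exceeding $\varepsilon$. This uniformity is the payoff of the first Baire step.

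I would then argue by induction on the bound $n_0$ that such a $U_0$ contains a nonempty open subset with controlled fiberwise oscillation, using connectivity through Proposition~\ref{p:2.2}. The base case $n_0=1$ is immediate: the hypothesis forces $\omega_{f^{x}}(Y)\le\varepsilon$ for every $x\in U_0$, and then for any $(x_0,y_0)$, choosing (by continuity of $f_{y_0}$) a neighborhood $U'\ni x_0$ with $|f(x,y_0)-f(x_0,y_0)|<\varepsilon$ on $U'$ yields $\omega_f(x_0,y_0)\le 4\varepsilon$ on $U'$. For the inductive step I would, for each $x$, locate the \emph{first escape point}
$$c(x)=\sup\{y\in Y:\ |f(x,y')-f(x,a)|\le\varepsilon \text{ for all } y'\in[a,y]\},\qquad a=\min Y,$$
which (since $Y$ is connected and hence has no neighbor points, so $f^{x}$ is continuous and takes intermediate values by Proposition~\ref{p:2.2}) splits $Y$ at a genuine level crossing and leaves at most $n_0-1$ disjoint $\varepsilon$-increments on the tail $[c(x),b]$, where $b=\max Y$. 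On the head $[a,c(x)]$ the oscillation of $f^{x}$ is $\le 2\varepsilon$, so the induction hypothesis, applied to the tail, should finish the step.

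The main obstacle is precisely that the splitting point $c(x)$ depends on $x$, so the inductive hypothesis cannot be applied to a \emph{fixed} restricted ordered compact directly. The way I would overcome this is a second Baire-category argument inside $U_0$: the sets $\{x:c(x)\ge d\}=\{x:\ |f(x,y)-f(x,a)|\le\varepsilon\ \forall y\in[a,d)\}$ are closed (intersections of closed conditions), which makes $x\mapsto c(x)$ suitably semicontinuous and lets me trap $c(x)$ inside a small order-interval $[d',d'']$ on a nonempty open subset $U_1\subseteq U_0$. Connectivity guarantees such separating points $d',d''$ exist, and on $U_1$ one may then treat the tail as sitting over the \emph{fixed} connected ordered compact $[d'',b]$ (itself a linearly ordered connected compact, inheriting the absence of neighbor points from $Y$), to which the induction applies; the short head $[a,d'']$ contributes only oscillation of order $\varepsilon$. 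Reconciling this $x$-dependence against the need for one open set in $X$ and one decomposition of $Y$ is the crux, and it is exactly where the absence of neighbor points (connectivity) and the semicontinuity furnished by separate continuity are indispensable.
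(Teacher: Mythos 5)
Your reduction to the density of $O_\varepsilon$ and your first Baire step (the closed sets $F_n$ exhaust $W$, so some $F_{n_0}$ has nonempty interior) run parallel to the opening of the paper's argument, but the induction you build on top has a genuine gap, and it sits exactly where you yourself locate ``the crux.'' The first Baire step only yields an \emph{upper bound}: on $U_0$ each section $f^x$ admits at most $n_0-1$ pairwise disjoint increments exceeding $\varepsilon$. From an upper bound alone you cannot control oscillation anywhere, which is why you are driven to the induction via the escape point $c(x)$. That step fails: after trapping $c(x)$ in a fixed order-interval $[d',d'']$ on some nonempty open $U_1$, you assert that ``the short head $[a,d'']$ contributes only oscillation of order $\varepsilon$.'' Only $[a,c(x)]$ is controlled by the definition of $c(x)$; the leftover piece $[c(x),d'']$ is not, and making an order-interval small in the order topology says nothing about $\omega_{f^x}$ on it uniformly in $x$ --- that uniform statement is precisely what the whole proof must establish, so the argument is circular here. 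There are secondary problems as well: by closedness, $|f(x,c(x))-f(x,a)|\le\varepsilon$ rather than $>\varepsilon$, so the head need not consume one of the $n_0$ allowed large increments and the count need not drop on the tail; and $c$ takes values in $Y$, not $\mathbb R$, so the semicontinuity/pointwise-discontinuity step needs justification and in any case only localizes $c(x)$ in the order topology, which is not the relevant notion of smallness.

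The paper closes exactly this gap by a different use of Baire category: the integer-valued count $\varphi(x)=\max N_x$ is upper semicontinuous, hence pointwise discontinuous, hence \emph{exactly constant}, equal to some $n$, on a nonempty open set $U_o$. One then fixes a single point $x_o$ and a single witness chain $a_1<b_1\le\dots\le a_n<b_n$ attaining $\varphi(x_o)=n$, and uses continuity in the first variable to keep $|f(x,a_i)-f(x,b_i)|>\varepsilon$ for all $x$ in a smaller open $U_1$. This one fixed chain saturates the budget of every $x\in U_1$ simultaneously, so any further increment $>\varepsilon$ of $f^x$ disjoint from the $[a_i,b_i]$ is impossible, and an increment $>2\varepsilon$ inside some $(a_i,b_i)$ is impossible by Proposition~\ref{p:2.2}, since it would split into two increments $>\varepsilon$. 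This yields a \emph{fixed} finite decomposition of $Y$ into intervals on each of which $\omega_{f^x}\le 4\varepsilon$ for all $x\in U_1$ at once --- no induction and no $x$-dependent splitting point. Your framework can be repaired along these lines (take $n_0$ minimal with $F_{n_0}$ having interior $U_0$; then $U_0\setminus F_{n_0-1}$ is a nonempty open set on which the count is exactly $n_0-1$, and fixed witnesses can be propagated by continuity in $x$), but as written the exactness-plus-fixed-witnesses idea is missing and the inductive step does not go through.
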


\begin{proof} Let $X$ be a Baire space, $(Y,<)$ be a linearly ordered connected compact and $f:X\times Y\to\mathbb R$ be a separately continuous function. We prove that $f$ has Namiola property.

Fix $\varepsilon >0$ and show that the open set 
$$G_{\varepsilon}=\{x\in X: \omega_f(x,y)< 9 \varepsilon \mbox{\,\,for\,\,every}\,\,y\in Y\}$$
is dense in $X$.

Let $U$ be a nonempty open in $X$ set. For every $x\in U$ we denote by $N_x$ the set of all $n\in {\mathbb N}$ such that there exist  $a_1,\dots,a_n,b_1,\dots,b_n\in Y$ such that $$a_1<b_1\le a_2<b_2\le \dots\le a_n<b_n\,\,\,{\rm and}\,\,\,|f(x,a_i)-f(x,b_i)|>\varepsilon$$ for every $i=1,\dots,n$. It follows from Proposition 2.1 that all sets $N_x$ are upper bounded. For every $x\in U$ we put $\varphi(x)=\max N_x$, if $N_x$ is nonempty, and $\varphi(x)=0$, if $N_x=\O$. It follows from the continuity of $f$ with respect to the first variable that for every $n$ the set $\{x\in U:\varphi(x)>n\}$ is open in $U$, i.e. the function $\varphi:U\to {\mathbb Z}$ is upper semicontinuous on the Baire space $U$. Therefore (see [6]) the function $\varphi$ is pointwise discontinuous. Then there exist an open in $U$ nonempty set $U_o$ and nonnegative $n\in {\mathbb Z}$ such that $\varphi(x)=n$ for every $x\in U_o$.

If $n=0$, then $|f(x,a)-f(x,b)|\leq \varepsilon$ for every $x\in U_o$ ³ $a,b\in Y$. Then taking any point $y_o\in Y$ and an open nonempty set $U_1\subseteq U_o$ such that $\omega_{f_{y_o}}(U_1)<\varepsilon$, we obtain that $\omega_f(x,y)<3 \varepsilon$ for every $x\in U_1$ ³ $y\in Y$. In particular, $U_1\subseteq G_\varepsilon$.

Now we consider the case of $n\in \mathbb N$. We take a point $x_o\in U_o$ and choose $a_1,\dots,a_n,b_1,\dots,b_n\in Y$ such that $a_1<b_1\le a_2<b_2\le
\dots\le a_n<b_n$ and  $|f(x_o,a_i)-f(x_o,b_i)|>\varepsilon$ for $1\leq i\leq n$.

Using the continuity of $f$ with respect to the first variable we choose an open neighborhood $U_1\subseteq U_o$ of $x_o$ in $U$ such that $|f(x,a_i)-f(x,b_i)|>\varepsilon$ for every $x\in U_1$ and $i\in \{1,\dots, n\}$. Show that for every $y_o\in Y$ there exists an open neighborhood $V$ of $y_o$ in $Y$ such that $\omega_{f^x}(V)\leq 4\varepsilon$ for every $x\in U_1$.

Let $y_o\in G=Y\setminus \bigcup\limits^{n}_{i=1}[a_i,b_i]$. Since the set $G$ is open in $Y$, there exists an open in $Y$ interval $V$ such that $V\subseteq G$. Then for every $a,b\in V$ with  $a<b$we have $[a,b]\cap[a_i,b_i]=\O$ for every $i=1,\dots, n$. Taking into account that $\varphi(x)=n$ and $|f(x,a_i)-f(x,b_i)|>\varepsilon$ for every $x\in U_1$ and $i\in \{1,\dots, n\}$ we obtain that $|f(x,a)-f(x,b)|\leq \varepsilon$, i.e. $\omega_{f^x}(V)\leq \varepsilon$ for every $x\in U_1$.

Let $a_i<y_o<b_i$ for some $i\in \{1,\dots,n\}$. Then we put $V=(a_i,b_i)$. Note that $|f(x,a)-f(x,b)|\leq 2\varepsilon$ for every points $a,b \in (a_i,b_i)$ and $x\in U_1$. Really, we suppose that there exist $a,b \in (a_i,b_i)$ and $x\in U_1$ such that  $|f(x,a)-f(x,b)|> 2\varepsilon$. Then according to Proposition 2.2 there exist a point $c\in (a,b)$ such that $|f(x,a)-f(x,c)|=|f(x,c)-f(x,b)|>\varepsilon$. But this contradicts to $\varphi(x)=n$. Thus, $\omega_{f^x}(V)\leq 2\varepsilon$.

It remains to consider the case of $y_o\in \{a_i,b_i:1\leq i \leq n\}$. Let $a_o=\min Y$, $b_o=\max Y$ and $a_o<y_o<b_o$. Put $y_1=\max(\{a_i,b_i:0\leq i\leq n\}\cap [a_o, y_o))$, $y_2=\min(\{a_i,b_i:0\leq i\leq n\}\cap(y_o,b_o])$ and $V=(y_1,y_2)$. It follows from Proposition 2.2 that for every $a\in(y_1,y_o]$, $b\in[y_o,y_2)$ and $x\in U_1$ the following inequalities $|f(x,a)-f(x,y_o)|\leq 2\varepsilon$ and $|f(x,y_o)-f(x,b)|\leq 2\varepsilon$ hold. Therefore $\omega_{f^x}(V)\leq 4\varepsilon$ for every $x\in U_1$. In the case $y_o=a_o$ or $y_o=b_o$, it enough to put $V=[y_o,y_2)$ or $V=(y_1,y_o]$.

Now we prove that $U_1\subseteq G_{\varepsilon}$. Let $(x_o,y_o)\in U_1\times Y$. Take an open neighborhood $V$ of $y_o$ in $Y$ such that $\omega_{f^x}(V)\leq 4\varepsilon$ for every $x\in U_1$. Using the continuity of $f$ with respect to the first variable we choose a neighborhood $U_2\subseteq U_1$ of $x_o$ in $X$
such that $\omega_{f_{y_o}}(U_2)<\varepsilon$. Then $\omega_f(U_2\times V)<9\varepsilon$.

Thus for every $\varepsilon$ we have that the set $G_{\varepsilon}$ is dense in Baire space $X$. Then the $G_{\delta}$-set $A=\bigcap\limits^{\infty}_{n=1} G_{\frac{1}{n}}$ is dense in $X$ too. Moreover, the function $f$ is jointly continuous at every point of set $A\times Y$, i.e. $f$ has Namioka property.
\end{proof}

\begin{corollary} \label{c:3.2} Every linearly ordered compact is a co-Namioka space.\end{corollary}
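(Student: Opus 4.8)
The plan is to reduce the general case to the connected case already settled in Theorem~\ref{th:3.1}. The only obstruction to connectedness of a linearly ordered compact is the presence of neighbor pairs, so I would \emph{fill in the jumps}. Starting from the linearly ordered compact $Y$, I construct a new linearly ordered set $\tilde Y$ by inserting, between every pair of neighbors $y'<y''$ of $Y$, an order copy of the open interval $(0,1)$, keeping the original order on $Y$ and ordering each inserted copy in the obvious way between $y'$ and $y''$. Then $\tilde Y$ is linearly ordered, it contains $Y$, and by construction it has no neighbor elements, so once we know it is compact it will be connected.

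Verifying that $\tilde Y$ is compact is the first substantive point: I would check that every nonempty closed subset of $\tilde Y$ has a minimal and a maximal element, using that $Y$ has this property and that each inserted copy of $(0,1)$ sits inside the closed segment $[y',y'']$ of $\tilde Y$, which is order isomorphic to $[0,1]$. The set $Y$ is the complement in $\tilde Y$ of the union of the inserted open intervals, hence closed, and one checks directly that the order topology of $Y$ coincides with the subspace topology inherited from $\tilde Y$; in particular the inclusion $\iota:Y\to\tilde Y$ is continuous (the preimage of a subbasic set $\{z<c\}$ is open in $Y$, the only nontrivial case being $c$ an inserted point, where it equals $\{y\in Y:y<y''\}$).

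Next I extend $f$ to a separately continuous $\tilde f:X\times\tilde Y\to\mathbb R$. On $X\times Y$ I set $\tilde f=f$, and on the inserted interval between neighbors $y'<y''$, parametrized by $t\in(0,1)$, I interpolate affinely:
$$\tilde f(x,t)=(1-t)f(x,y')+t\,f(x,y'').$$
Continuity of each $\tilde f^{z}$ (fixed second coordinate) is immediate, since it is either some $f^{y}$ or a convex combination of two such functions; continuity of each $\tilde f_x$ on $\tilde Y$ follows because $f_x$ is continuous on the closed set $Y$, the interpolant is affine hence continuous on each inserted interval, and the one-sided limits at the endpoints $y',y''$ match the values $f(x,y')$ and $f(x,y'')$. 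The separate continuity of $\tilde f$ is the second substantive point, and I expect the continuity of $\tilde f_x$ in the order topology of $\tilde Y$ at the inserted endpoints to require the most care.

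Finally, since $\tilde Y$ is a linearly ordered connected compact, Theorem~\ref{th:3.1} provides a dense $G_\delta$-set $A\subseteq X$ such that $\tilde f$ is jointly continuous at every point of $A\times\tilde Y$. As $\iota$ is continuous, $f=\tilde f\circ(\mathrm{id}_X\times\iota)$ is jointly continuous at every point of $A\times Y$, so $f$ has the Namioka property and $Y$ is a co-Namioka space.
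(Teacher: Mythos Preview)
Your approach is essentially identical to the paper's: it too fills in each neighbor pair with an order copy of $(0,1)$, extends $f$ by the same affine interpolation $(1-t)f(x,y')+tf(x,y'')$, and then applies Theorem~\ref{th:3.1} to the resulting connected linearly ordered compact before restricting back to $Y$. Your write-up is in fact more careful than the paper's on the verification steps (compactness of $\tilde Y$, continuity of the inclusion, separate continuity of the extension), which the paper simply asserts.
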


\begin{proof} Let  $(Y,<)$ be a linearly ordered compact. If the set $D$ of all pairs of neighbor points in $Y$ is empty, then according to Theorem 3.1 the space $Y$ is co=Namioka.

Let $D\ne \O$. For every pair $d\in D$ od neighbor points in $Y$ we denote by $a_d$ and $b_d$ the left and the right neighbor points of the pair $d$, i.e. $d=\{a_d,b_d\}$ and $a_d<b_d$. We put $Z=Y\cup(D\times (0,1))$ and define a linear order on $Z$, which is an extension of the order on $Y$. Let  $z'\in Y$, $d\in D$, $t\in (0,1)$ and $z''=(d,t)$. Then $z'<z''$, if $z'\leq a_d$, and $z''<z'$, if $b_d\leq z'$. Let $z'=(d',t'),z''=(d'',t'')\in D\times (0,1)$. Then $z'<z''$, if $a_{d'}<a_{d''}$, or $d'=d''$ and $t'<t''$.

Note that $(Z,<)$ is a compact space which has not neighbor elements. Moreover the space $(Y,<)$ is a compact subspace of $(Z,<)$.

Let $X$ be a Baire space and $f:X\times Y\to \mathbb R$ be a separately continuous function. We construct a mapping $g:X\times Z\to \mathbb R$, which is a extension of $f$. For every $x\in X$ and $z=(d,t)\in D\times (0,1) $ we put $g(x,z)=(1-t)f(x,a_d)+tf(x,b_d)$. It easy to see that the function $g$ is separately continuous too. Therefore according to Theorem 3.1 there exists a dense in $X$ $G_{\delta}$-set $A\subseteq X$ such that $g$ is jointly continuous at every point of set $A\times Z$. Therefore the function $f$ is jointly continuous at every point of the set $A\times Y$. Thus, $f$ has Namioka property and $Y$ is co-Namioka.
\end{proof}

\section{Example}

In this section we show that condition of continuity of $f$ with respect to the first variable in the above reasoning can not be replaces by quasicontinuity.

Let $X$ be a topological space and $f:X\to Y$. Recall that the mapping $f$ is called {\it quasicontinuous at a point $x_o\in X$}, if for every neighborhoods $U$ of $x_o$ in $X$ and $V$ of $y_o=f(x_o)$ in $Y$ there exists an open in $X$ nonempty set $G\subseteq U$ such that $f(G)\subseteq V$. A mapping $f$ is called {\it quasicontinuous}, if $f$ is quasicontinuous at every point $x\in X$.

\begin{example} Let $X=(0,1)$ and $Y=[0,1]\times \{0,1\}$ be a linearly ordered compact with the lexicographical order, i.e. $(y,i)<(z,j)$, if $y<z$ or $y=z$ and $i<j$. For every $t\in [0,1]$ we put $t_l=(t,0)$ and $t_r=(t,1)$. The function $f:X\times Y\to \mathbb R$ we define by: $f(x,y)=0$, if $x_r\leq y$, and $f(x,y)=1$, if $x_l\geq y$.

For every $x\in (0,1)$ we have $(f^x)^{-1}(0)=[x_r,1_r]$ and $(f^x)^{-1}(1)=[0_l,x_l]$. Therefore all functions $f^x$ are continuous. If $y\in \{0_l,0_r\}$, then $f_y(x)=1$ for every $x\in X$, and if $y\in \{1_l,1_r\}$, then $f_y(x)=0$ for every $x\in X$. Let $z\in (0,1)$. Then for $y=z_l$ we have $f_y(x)=0$, if $x\in
(0,z)$, and $f_y(x)=1$, if $x\in [z,1)$. And for $y=z_r$ we have $f_y(x)=0$, if $x\in (0,z]$, and $f_y(x)=1$, if $x\in (z,1)$. Thus, the function $f$ is quasicontinuous with respect to the first variable. But the function $f$ is jointly discontinuous at every point $(x,x_l)$ and $(x,x_r)$ for $x\in X$.\end{example}

\section{Acknowledgment}
The author would like to thank Maslyuchenko O.V. for his helpful comments.

\bibliographystyle{amsplain}

\end{document}